\documentclass[12pt,a4paper,oneside]{amsart}
\usepackage{color,amssymb,latexsym,amsfonts,textcomp,fancyhdr,calc,graphicx}
\usepackage{amsmath,amstext,amsthm,amssymb,amsxtra}
\usepackage[left=3cm,right=3cm,bottom=2cm,top=2cm]{geometry} 
\usepackage[colorlinks,citecolor=red,pagebackref,hypertexnames=false]{hyperref}
\usepackage{dsfont}
\usepackage[framemethod=tikz]{mdframed}
\usepackage[inline]{asymptote}

\usepackage[colorlinks,citecolor=red,pagebackref,hypertexnames=false]{hyperref}

\usepackage{txfonts,pxfonts,tikz} 
\usepackage{tgschola}
\usepackage[T1]{fontenc}

\numberwithin{equation}{section}

\theoremstyle{plain}
\newtheorem{theorem}{Theorem}[section]
\newtheorem{lemma}[theorem]{Lemma}

\theoremstyle{definition}

\newtheorem{case[theorem]}{Case}

\theoremstyle{remark}

\numberwithin{equation}{section}



\newcommand{\beql}[1]{\begin{equation}\label{#1}}
\newcommand{\eeq}{\end{equation}}
\newcommand{\comment}[1]{}

\newcommand{\Abs}[1]{{\left|{#1}\right|}}

\newcommand{\Set}[1]{{\left\{{#1}\right\}}}

\newcommand{\RR}{{\mathbb R}}

\newcommand{\ZZ}{{\mathbb Z}}

\newcommand{\QQ}{{\mathbb Q}}

\newcommand{\Span}{{\rm span\,}}

\begin{document}

\title{Deciding multiple tiling by polygons in polynomial time}

\author{Mihail N. Kolountzakis}
\address{Department of Mathematics and Applied Mathematics, University of Crete, Voutes Campus, 70013 Heraklion, Crete, Greece.}
\email{kolount@gmail.com}

\subjclass[2010]{primary 52C05;  secondary 68Q25, 52C20, 52B55, 11H31}
\keywords{Lattices, tiling, algorithms, discrete subgroups}

\date{December 2019; revised May 2020}

\begin{abstract}
Suppose $P$ is a symmetric convex polygon in the plane. We give an algorithm,
running in time polynomial in the number of sides of the polygon, which decides
if $P$ can tile the plane by translations at some level (not necessarily at level one; this is multiple tiling). The main technical contribution is a polynomial time algorithm that selects, if this is possible, for each $j=1,2,\ldots,n$ one of two given vectors $e_j$ or $\tau_j$ so that the selection spans a discrete additive subgroup.
\end{abstract}

\maketitle

\tableofcontents

\section{Multiple tiling by translations with a single tile}

In this note we are discussing when a convex polygon can tile the plane by
translations with a lattice.
A lattice in the plane is a discrete additive subgroup which contains two linearly independent vectors.
Equivalently a lattice $L \subseteq \RR^2$ is a linear image of $\ZZ^2$, $L = A\ZZ^d$, where the matrix
$A$ is non-singular.

If $P$ is a convex polygon and $L$ is any subset of the plane
then we say that $P$ {\em tiles by translations} with $L$ if the translates
$$ P+\ell,\ \ (\ell \in L), $$ cover almost every point of the plane (in the sense of Lebesgue measure) exactly once. For
instance, if $P = [0, 1]^2$ is the unit square and $L = \ZZ^2$ then we do
indeed have tiling by translations. In this case the only points of the plane that are not
covered exactly once by the copies of $P$ are the points on the boundaries of these copies,
clearly a set of Lebesgue measure zero.

If we restrict ourselves to $P$ being a convex polygon it has long been known
that $P$ has to be symmetric in order to be able to tile the plane.  It is also
known that only parallelograms and symmetric hexagons admit tilings.

The problem becomes more interesting if one considers {\em multiple} tilings, i.e., tilings at higher level
than 1: almost every point of the plane is covered the same number of times
by the translates of $P$. This number is called the {\em level} of the tiling.
In this case many more convex polygons (again,
they must be symmetric) admit multiple tilings.  For instance, every symmetric convex
polygon whose vertices have rational coordinates admits multiple tiling by the lattice
$\ZZ^d$ at some level (this is easily seen from Theorem \ref{bolle} below).

It was Bolle \cite{Bo94} who first studied the question ``when does a polygon
tile multiply with a {\em given} lattice''. Bolle's characterization is the
following as was modified in \cite{K00}.

\begin{theorem} \label{bolle}
Let  $P$ be a convex polygon  in $\RR^2$, and $L$ be a lattice in $\RR^2$.
Then $P+L$ is a tiling of the plane at some level if and only if $P$ is centrally
symmetric, and for each pair of parallel edges $e$ and $e'$ of $P$ at least one
of the following conditions is satisfied:
\begin{itemize}
\item
{\bf (A1)} The translation vector, $\tau$, such that $e' = e + \tau$, is in $L$, or
\item
{\bf (B1)} $e\in L$ and {\bf (B2)} there exists $t\in(0, 1)$ such that $te+\tau\in L$.
\end{itemize}
\end{theorem}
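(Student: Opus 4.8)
The plan is to combine the standard Fourier criterion for multiple tiling with an elementary ``edge-matching'' analysis of the multiplicity function, carried out one pair of opposite edges at a time. By Poisson summation, $P+L$ tiles at some level if and only if the multiplicity function $g(x)=\sum_{\ell\in L}\one_P(x-\ell)$ is constant almost everywhere, and this happens exactly when $\widehat{\one_P}(\xi)=0$ for every nonzero $\xi$ in the dual lattice $L^{*}$. When $P$ is centrally symmetric --- say, about the origin --- applying Stokes' theorem edge by edge expresses $\widehat{\one_P}(\xi)=\sum_j T_j(\xi)$ as a sum over the pairs $\{e_j,e_j'\}$ of opposite edges, where $T_j(\xi)$ depends only on $\xi$, on the edge vector $u_j$ of $e_j$, and on the midpoint $m_j$ of $e_j$, and where $T_j(\xi)$ vanishes (interpreted by its removable-singularity value where $\xi\cdot u_j=0$) as soon as any one of the following holds: $\xi$ is parallel to $u_j$; $\xi\cdot u_j\in\ZZ\setminus\{0\}$; or $\xi\cdot m_j\in\tfrac12\ZZ$. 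Granting this, the ``if'' direction is immediate: with $P$ symmetric about the origin one has $\tau_j=-2m_j$, so if (A1) holds, i.e.\ $\tau_j\in L$, then $-2(\xi\cdot m_j)=\xi\cdot\tau_j\in\ZZ$ for every $\xi\in L^{*}$, hence $\xi\cdot m_j\in\tfrac12\ZZ$ and $T_j(\xi)=0$; and if instead (B1) and (B2) hold, i.e.\ $u_j\in L$ and $t\,u_j+\tau_j\in L$ for some $t\in(0,1)$, then for $\xi\in L^{*}$ we have $\xi\cdot u_j\in\ZZ$, and either $\xi\cdot u_j\neq 0$ (so $T_j(\xi)=0$) or $\xi\cdot u_j=0$, in which case $\xi\cdot(t\,u_j+\tau_j)\in\ZZ$ again forces $\xi\cdot m_j\in\tfrac12\ZZ$ and $T_j(\xi)=0$. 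Thus every $T_j$ vanishes on $L^{*}$, so $\widehat{\one_P}$ vanishes on $L^{*}\setminus\{0\}$ and $P+L$ multiply tiles.

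For the ``only if'' direction I would first recover central symmetry geometrically. If $P+L$ is a multiple tiling then, at a generic point of a copy $e+\ell$ of an edge $e$ of $P$, the jump of $g$ across the line carrying that edge --- which is $\pm1$ if one accounts only for $P+\ell$ --- must be cancelled by copies of other edges of $P$ that are parallel to $e$ and lie on the same line; since $P$ is convex, the only edges of $P$ parallel to $e$ are $e$ itself and the oppositely oriented edge $e'$, so $e'$ must exist, and matching up the positions and lengths of the edges that cancel this jump forces $|e|=|e'|$. A convex polygon all of whose opposite edges are parallel and equally long is centrally symmetric: it has the same cyclic sequence of edge vectors as its central symmetrization $\tfrac12(P+(-P))$, and two convex polygons with the same cyclic sequence of edge vectors are translates of one another. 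Having proved this, translate $P$ so that it is symmetric about the origin; the statements (A1), (B1), (B2) involve only edge vectors and difference vectors, so this is harmless.

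Now fix one pair $\{e,e'\}$ and rotate so that $e$ is horizontal; write $\ell=(\ell_1,\ell_2)$. The cancellation property says precisely that on every horizontal line the number of translates $e+\ell$ through a given point equals the number of translates $e'+\ell$ through that point. I split on whether $L$ contains a nonzero horizontal vector. If it does not, then $\ell\mapsto\ell_2$ is injective on $L$, so each horizontal line carries at most one copy of $e$ and at most one of $e'$; the cancellation forces these two segments to coincide whenever either is present, and written out in coordinates this is exactly $\tau\in L$, i.e.\ (A1). If instead $L$ contains a shortest horizontal vector $h$, then on a horizontal line the copies of $e$ form a full $\ZZ h$-coset of translates of $e$, and likewise the copies of $e'$; equating the two resulting $\ZZ h$-periodic step functions amounts to a condition on the length of $e$ modulo $|h|$ together with an offset condition, and unwinding it produces the dichotomy ``$e\in L$, and $t\,u+\tau\in L$ for some $0<t<1$'' --- that is (B1) and (B2) --- versus ``$\tau\in L$'' --- that is (A1). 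The open interval in (B2) surfaces here because a coset point landing exactly at an endpoint of $e$ is the degenerate case, which collapses to (A1) instead.

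The main obstacle is this last case: one must carefully bookkeep several overlapping copies of $e$ on a single line, track open versus closed intervals and where the relevant lattice coset meets the interior rather than the endpoints of $e$, and see that these are exactly the phenomena captured by the strict inequalities $0<t<1$ in (B2) and by (A1) occurring as the boundary case. Everything else --- the Poisson reformulation, the Stokes decomposition of $\widehat{\one_P}$ with its three vanishing mechanisms, and the central-symmetry argument --- is routine once the normalization is fixed: center $P$ at the origin and place the edge under study on a horizontal line.
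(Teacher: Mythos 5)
The paper does not prove this statement at all: it is quoted as Bolle's theorem \cite{Bo94}, in the modified form of \cite{K00}, so there is no in-paper argument to compare against. Judged on its own, your hybrid route --- Fourier analysis for sufficiency, direct geometric cancellation for necessity --- is sound and close in spirit to the two existing proofs (Bolle's is geometric throughout, \cite{K00} is Fourier-analytic). Your three vanishing mechanisms for the pair term are the right ones: with $P$ symmetric about the origin the pair $\Set{e_j,e_j'}$ contributes a multiple of $(\xi\cdot n_j)\,\mathrm{sinc}(\xi\cdot u_j)\,e^{-2\pi i\xi\cdot m_j}\bigl(1-e^{-2\pi i\xi\cdot\tau_j}\bigr)$, and since $\tau_j=-2m_j$ the last factor vanishes exactly when $\xi\cdot m_j\in\tfrac12\ZZ$; your derivation of the ``if'' direction from this is correct. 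On the ``only if'' side, the two things you flag as delicate do close. First, the line-by-line jump cancellation: for a line $\lambda$ parallel to $e$, the translates of $e$ (resp.\ $e'$) landing on $\lambda$ form either a single segment or a single coset of $L\cap\RR u$ worth of segments, all with the same sign of jump, so equality of the two counting functions is exactly the matching you describe. Second, the crux of your lattice-line case: writing $h$ for the shortest horizontal vector of $L$, $|e|=N|h|$, and $c$ for the horizontal offset between a copy of $e$ and a copy of $e'$ on the same line measured in units of $|h|$, condition (B2) asks for a point of $c+\ZZ$ in the open interval $(0,N)$; this can only fail when $N=1$ and $c\in\ZZ$, which is precisely the case $\tau\in L$, i.e.\ (A1). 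So the dichotomy ``(A1) or ((B1) and (B2))'' does come out, with the endpoints $t\in\Set{0,1}$ collapsing to (A1) as you predicted. The only parts left as assertions are the Stokes/divergence computation of $\widehat{\one_P}$ and the density argument giving $|e|=|e'|$ (needed before central symmetry can be concluded); both are standard and present no obstruction. In short: a correct outline of a proof the paper itself omits, combining the strengths of the two known approaches --- the Fourier direction gives sufficiency with no case analysis, while the geometric direction yields necessity without having to show that the individual pair terms $T_j$ cannot cancel each other on $L^{*}\setminus\Set{0}$, which is the technical burden a purely Fourier proof must carry.
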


It is, perhaps, worth noting that Theorem \ref{bolle} is true for more general polygonal domains.
As shown in \cite{K00} this characterization is true for any polygonal domain with the {\em pairing property}:
for every edge of the polygon there is exactly one other edge parallel to it and this edge has the same length
and opposite orientation. Every symmetric convex polygon clearly has this property but many more polygonal domains do,
even disconnected ones. See, for example, Fig.\ \ref{pairing-figure}.

\begin{figure}[ht] \label{pairing-figure}
\input{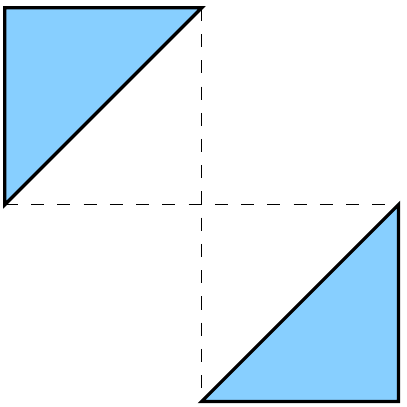_t}
\caption{A polygonal domain with the pairing property. It is easy to see that it tiles with a lattice at level 1.}
\end{figure}

Theorem \ref{bolle} has been generalized in various ways
\cite{K00,GRS12,GKRS13,LL18,L18}. 

In \cite{L18}, in particular, B. Liu proves that every multiple translational tiling by a
convex polygon is necessarily periodic.  It is also proved in
\cite{L18} that if a convex polygon admits a multiple translational tiling then
it can also do so by lattice translations.  This means that if we care to
answer the question ``does this convex polygon tile multiply by translations'' then
the answer will be the same if we ask for lattice translations.

\begin{figure}[ht] \label{polygon-figure}
\input{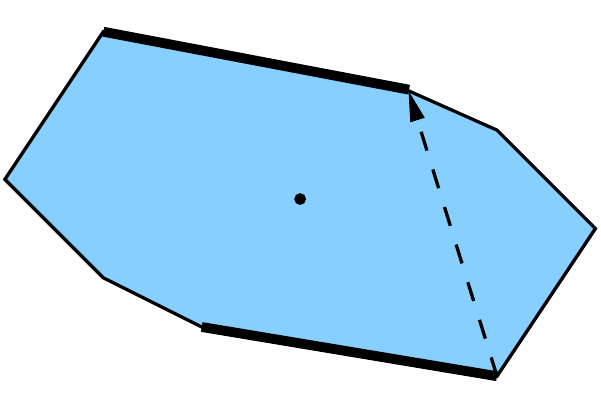_t}
\caption{The edges $e_1, e_2, \ldots$ and the corresponding translation vectors $\tau_1, \tau_2, \ldots$ of a symmetric convex polygon.}
\end{figure}

For a symmetric convex polygon $P$ let us denote by
$$
e_1, e_2, \ldots, e_n, e_1', e_2', \ldots, e_n'
$$
its edges, in counterclockwise order.
By the symmetry of $P$ edge $e_1$ is parallel to edge $e_1'$, edge $e_2$ to edge $e_2'$ and so on.
Denote also by
$$
\tau_1, \tau_2, \ldots, \tau_n
$$
the translation vectors needed to carry $e_1$ to $e_1'$, $e_2$ to $e_2'$, etc.
In other words $e_j' = e_j + \tau_j$. See Fig.\ \ref{polygon-figure}.

It is mentioned in \cite{L18} that if one tries to directly use Theorem \ref{bolle} in deciding if a given polygon tiles multiply by {\em some} lattice,
one gets an exponential algorithm, due to the fact that one has to decide for each pair of opposite edges which of the two conditions in Theorem \ref{bolle} holds.
Indeed, as it is stated in \cite{L18}, one has to find a subset $J \subseteq [n] = \Set{1, 2, \ldots, n}$ such that
\beql{span1}
\Span_\ZZ\Set{\tau_j, e_{j'}:\ j \in J, j' \notin J}
\eeq
is a lattice, or show that no such subset exists.
(Here the set $J$ corresponds to the indices for which the alternative (A1) of Theorem \ref{bolle} holds; see \S \ref{sec:etau}.)
Clearly, if one tries all possible such $J$ one gets an exponential time algorithm.
The author then proceeds in \cite{L18} to give an alternative, polynomial time algorithm that decides if
a given symmetric convex polygon can tile multiply by translations with some lattice.

It is the purpose of this note to show a polynomial time algorithm for this question that is based directly on Theorem \ref{bolle},
showing, in effect, that the selection of $J \subseteq [n]$ such that \eqref{span1} is true can be done in polynomial time. The algorithm we present is not simpler or faster than that given in \cite{L18}. The main technical contribution of this note is the selection algorithm in \S \ref{sec:etau}, which is perhaps of independent interest.

\section{The algorithm}

\subsection{The computational model.}\label{sec:model}
In view of the fact, mentioned in the previous section and easily derivable from Theorem \ref{bolle}, that
every symmetric polygon with rational vertices admits a multiple lattice tiling, we have to be careful to state exactly what kind of computational model we are assuming. It makes no sense, for instance, to assume that the coordinates of all our vertices are rational numbers (which would be the most natural thing, in terms of representation of a number in the computer) as then the answer would always be yes. So we have to allow for more general numbers in our model.

The simplest, though still unrealistic, model would be to assume that we can operate on arbitrary real numbers and that we can do arithmetic in constant time. It turns out that even this is not enough. It is important, as will be clear in the next section, to be able to tell when a number is rational.

For example, we will repeatedly have to decide if a given collection of vectors generates a discrete subgroup of $\RR^2$ or not.
This is easily done in polynomial time in our model.
Pick first one or two of these vectors which form a $\RR$-basis of the $\RR$-vector space that this collection generates.
For each other vector then check that its coefficients with respect to the chosen basis are rational.
All that is needed for the above is Gaussian elimination and in fixed dimension.

For simplicity we will carry out our analysis below using the model described above: arbitrary real numbers, constant time arithmetic operations, and we can tell in constant time if a number is rational. Then, we will explain how all our algorithms remain in polynomial time if we operate in the real-life computation model, of finite sized integers, where arithmetic operations take time that depends polynomially on the number of digits.

\subsection{Selecting the $e$s and the $\tau$s that form a discrete group.}\label{sec:etau}
The next theorem is the main computational result of this note and tells us that the selection of the indices $j$ for which conditions (A1) or (B1) hold
can indeed be carried out in polynomial time. A little more work then, in \S\ref{sec:algorithm}, will manage to satisfy condition (B2) of Theorem \ref{bolle} as well.

\begin{theorem}\label{main}
Suppose $e_1,\ldots,e_n \in \RR^2$ are non-zero and pairwise non-parallel, and similarly for $\tau_1,\ldots,\tau_n \in \RR^2$.
We are seeking a set of indices
$$
J \subseteq [n] = \Set{1, 2, \ldots, n}
$$
such that the group
\beql{et}
\Span_\ZZ\Set{\tau_j, e_{j'}:\ j\in J,\ \ j' \in [n]\setminus J}
\eeq
is discrete.
We can find such a $J$ in polynomial time or decide that it does not exist.
\end{theorem}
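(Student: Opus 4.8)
\emph{Sketch of a proof.} The plan is to reduce the a priori exponential search over subsets $J\subseteq[n]$ to a search of size $O(n^2)$, by observing that whenever the group in \eqref{et} is discrete a certificate of its discreteness can be read off from just two of the given vectors. First I would record the elementary fact, already exploited in \S\ref{sec:model}, that a finitely generated subgroup $G=\Span_\ZZ\Set{v_1,\dots,v_m}$ of $\RR^2$ is discrete if and only if, after choosing any maximal $\RR$-linearly independent subset $B$ of $\Set{v_1,\dots,v_m}$, every $v_i$ has rational coordinates with respect to $B$; this condition is checkable in $O(m)$ steps in our computational model. I would also note that, since the $e_j$ are pairwise non-parallel and so are the $\tau_j$, any selection $\Set{\tau_j:j\in J}\cup\Set{e_{j'}:j'\in[n]\setminus J}$ with $\abs J\ge2$ contains two $\RR$-independent vectors $\tau_a,\tau_b$ with $a,b\in J$; hence, if \eqref{et} is discrete it contains $\ZZ\tau_a\oplus\ZZ\tau_b$ as a finite-index subgroup, so it is contained in $\frac1N(\ZZ\tau_a\oplus\ZZ\tau_b)$ for some integer $N\ge1$, and therefore every selected vector has rational coordinates in the basis $\Set{\tau_a,\tau_b}$.

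This last point is the crux: the basis certifying discreteness may be taken to be a pair of the $\tau_j$'s whenever $\abs J\ge2$. So I would enumerate all $O(n^2)$ pairs $a\ne b$ with $\tau_a\nparallel\tau_b$ and, for each, decide membership index by index: $j$ may be placed in $J$ exactly when $\tau_j$ has rational coordinates in $\Set{\tau_a,\tau_b}$, and in $[n]\setminus J$ exactly when $e_j$ does. If some $j\notin\Set{a,b}$ admits neither, discard the pair; otherwise output any $J$ consistent with these constraints and with $a,b\in J$. Such a $J$ is valid, since all its selected vectors then lie in $\frac1N(\ZZ\tau_a\oplus\ZZ\tau_b)$ for a common $N$, so their $\ZZ$-span is discrete. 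Conversely, every solution with $\abs J\ge2$ is discovered this way: picking $a,b\in J$, each remaining index is placeable (its contributing vector lies in the discrete group \eqref{et}, hence is rational in $\Set{\tau_a,\tau_b}$), so the pair is not discarded and the algorithm returns some valid selection. The only solutions not of this form have $\abs J\le1$, and these I would enumerate directly: the $n+1$ candidates $J=\emptyset$ and $J=\Set{j}$, testing discreteness of \eqref{et} for each in $O(n)$ time. The two phases together either produce a valid $J$ or certify that none exists, using $O(n^3)$ arithmetic-and-rationality operations.

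Finally I would confirm that everything remains polynomial in the bit model: each coordinate computation is Cramer's rule on a fixed-size linear system, and the common denominator $N$ is a bounded product of such, so all integers occurring have polynomially many digits. The single genuinely new ingredient is the reduction of the subset search to the pair search via the ``certificate from two vectors'' observation; I expect the care needed to lie not there but in the bookkeeping of the low-$\abs J$ corner cases (and very small $n$), together with the remark that for $n\ge3$ no selection can $\RR$-span a line, so discrete groups of rank $\le1$ never have to be treated separately.
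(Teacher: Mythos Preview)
Your argument is correct and lands on the same $O(n^2)$ search over pairs, but the justification is genuinely more direct than the paper's. The paper argues structurally: it shows that any valid $J$ with $\Abs{J}\ge 2$ can be enlarged to one that is \emph{maximal} for the property ``$T(J)$ is discrete'' (via a finite-index lemma, Lemma~\ref{discrete}), and then proves (Lemma~\ref{maximal-intersection}) that two such maximal sets meet in at most one point, so there are only $O(n^2)$ of them; one then enumerates maximal sets by growing pairs and checks \eqref{et} for each. You instead bypass both lemmas with the single observation that any two $\tau_a,\tau_b$ in a valid $J$ already form an $\RR$-basis in which every selected vector must be rational, so a pair $(a,b)$ directly yields per-index constraints whose \emph{any} satisfying assignment is automatically a solution. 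This is cleaner for the theorem as stated. What the paper's route buys in return is the structural fact that one may always take $J$ maximal with respect to $T(J)$ being discrete, which it re-uses in \S\ref{classB} when deciding tiling; your argument does not produce that directly, though it would be easy to recover by a greedy post-processing step.
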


\noindent{\em Proof.}
For $J \subseteq [n]$ write
$$
E(J) = \Span_\ZZ\Set{e_j:\ j \in J},\ \ \ T(J) = \Span_\ZZ\Set{\tau_j:\ j \in J}.
$$
With this notation we have that \eqref{et} is precisely the group
$$
T(J)+E([n] \setminus J).
$$

There is nothing to prove for $n\le 2$, since $\Span_{\ZZ} \Set{e_1, e_2}$ is always a lattice, so we are assuming from now on that $n \ge 3$.

First we check if $J=\emptyset$ satisfies the requirements of the Theorem. This is equivalent to $E([n])$ being a discrete group,
and we can check this in polynomial time as explained in \S \ref{sec:model}.

Second we observe that we can easily find such a $J$ of size 1, if it exists. Just examine each such $J$
and check if the set \eqref{et} is discrete. This clearly takes polynomial time.

In the analysis that follows we therefore search for sets $J$ with size at least 2.
Equivalently, since all $\tau_j$s are assumed non-parallel we are assuming that $T(J)$ contains two linearly independent vectors.

We now claim that if we can find $J \subseteq [n]$, with $\Abs{J} \ge 2$,  such that \eqref{et} is discrete not only
we have that $T(J)$ is discrete but we can also assume that $J$ is maximal with the property
\beql{property}
T(J) \text{ is discrete.}
\eeq
Let us prove this claim.
We may of course assume that $J$ is of {\em maximum size} such that \eqref{et} is discrete.
Then, if $J$ is not maximal with respect to property \eqref{property}, then there is $k \in [n]\setminus J$ such that $T(J \cup \Set{k}) \supseteq T(J)$ is discrete.
This implies that the group $T(J)$ has finite index in $T(J \cup \Set{k})$
(remember that we are assuming that $T(J)$ contains two linearly independent vectors).

We now state a useful lemma.
\begin{lemma}\label{discrete}
Suppose $G_1\subseteq G$ and $H$ are subgroups of a topological abelian group.
Suppose the group $G_1$ has finite index in $G$ and $G_1+H$ is discrete.
Then $G+H$ is also discrete.
\end{lemma}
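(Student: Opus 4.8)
The plan is to deduce discreteness of $G+H$ from two facts that are each easy in isolation: first, that $G_1+H$ is automatically of \emph{finite index} in $G+H$ (not merely inside $G$), and second, that a closed subgroup of finite index in a topological group is open, so that discreteness propagates upward.

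First I would check that $[\,G+H : G_1+H\,] < \infty$. The map $G \to (G+H)/(G_1+H)$ sending $g \mapsto g+(G_1+H)$ is onto, because $H\subseteq G_1+H$ forces $g+h$ and $g$ to have the same image, and it kills $G_1$; hence it factors through $G/G_1$, and $(G+H)/(G_1+H)$ is a quotient of that finite group. Next, write $D=G_1+H$ and $K=G+H$; by hypothesis $D$ is discrete, and one recalls (or reproves in two lines) the standard fact that a discrete subgroup of a Hausdorff topological group is closed: picking an open $U\ni 0$ with $U\cap D=\{0\}$ and a symmetric open $V\ni 0$ with $V-V\subseteq U$, any point $x$ in the closure of $D$ has $(x+V)\cap D$ consisting of a single point (two such points differ by an element of $(V-V)\cap D=\{0\}$), and the $T_1$ property forces that point to be $x$, so $x\in D$. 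Thus $D$ is closed in the ambient group, hence closed in $K$. Since $K$ is $D$ together with finitely many translates $k_i+D$, each closed (translation being a homeomorphism), the complement $K\setminus D$ is closed, so $D$ is \emph{open} in $K$.

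The conclusion is then immediate from transitivity of openness: $\{0\}$ is open in $D$ (as $D$ is discrete), and $D$ is open in $K$, so $\{0\}$ is open in $K$; translating, every singleton is open in $K$, i.e.\ $K=G+H$ is discrete. I do not expect a serious obstacle here — the whole argument is a short diagram chase plus the two point-set observations above. The only thing to be careful about is the topological input: one must read ``discrete'' in the subspace sense and assume the ambient group is Hausdorff (without that hypothesis the statement actually fails, though it is automatic in the intended setting where everything lives in $\RR^2$), and one must remember that a finite-index closed subgroup is open — that single step, via closedness of $D$, is the one non-formal point in the proof.
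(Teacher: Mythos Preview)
Your argument is correct and, at its core, is the same observation the paper makes: since $[G:G_1]<\infty$ one has $G=G_1+F$ for a finite set $F$, so $G+H=(G_1+H)+F$ is a finite union of translates of the discrete set $G_1+H$. The paper stops there, simply asserting that a finite union of translates of a discrete set is discrete; you instead unpack this step by showing $[G+H:G_1+H]<\infty$, proving that a discrete subgroup of a Hausdorff group is closed, and then using that a closed finite-index subgroup is open. Your route is longer but more careful: the paper's one-line conclusion tacitly relies on the cosets not accumulating on one another, which is exactly what closedness of $G_1+H$ guarantees, and you are right to flag that some separation hypothesis (Hausdorff, automatic in the intended $\RR^2$ setting) is needed for this to go through.
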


\begin{proof}
Since $[G:G_1] < \infty$ it follows that $G = G_1+F$ where $F$ is a finite set.
Then $G+H=G_1+H+F$ and this is discrete as it consists of finitely many translates of the discrete set $G_1+H$.

\end{proof}

Writing
$J' = [n] \setminus (J \cup \Set{k})$ we claim that
$$
T(J \cup \Set{k}) + E(J')
$$
is also discrete.
This follows from Lemma \ref{discrete} using
$$
G_1 = T(J),\ \ \ G = T(J\cup\Set{k}),\ \ \ H = E(J').
$$
This leads to contradiction with the fact that $J$ was assumed of maximum size and completes the proof of our claim
that it is enough to search for a $J$ which is maximal with respect to property \eqref{property}.

\begin{lemma}\label{maximal-intersection}
Suppose $A, B \subseteq [n]$ are both maximal with the property \eqref{property}.
Then $\Abs{A \cap B} \le 1$.
\end{lemma}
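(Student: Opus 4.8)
The plan is to argue by contradiction. Suppose $A\neq B$ are both maximal for \eqref{property} but $\Abs{A\cap B}\ge 2$, and fix two distinct indices $i,j\in A\cap B$. Since the $\tau$'s are pairwise non-parallel, $\tau_i$ and $\tau_j$ are linearly independent, so $\Lambda := T(\Set{i,j}) = \Span_\ZZ\Set{\tau_i,\tau_j}$ is a full-rank lattice in $\RR^2$. I claim that then $T(A\cup B)$ is discrete; granting this, the lemma follows at once, because $A\subseteq A\cup B$ together with the maximality of $A$ forces $A\cup B=A$, i.e.\ $B\subseteq A$, and symmetrically $A\subseteq B$, so $A=B$, contradicting $A\neq B$.

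So the substance of the proof is showing that $T(A\cup B)$ is discrete. First, $T(A)$ is discrete by \eqref{property} and contains the full-rank lattice $\Lambda$, hence $T(A)$ is itself a full-rank lattice with $[T(A):\Lambda]<\infty$; in particular every element of $T(A)$ has finite order modulo $\Lambda$, which means $T(A)\subseteq\QQ\Lambda$, the $\QQ$-span of $\tau_i,\tau_j$. The same reasoning gives $T(B)\subseteq\QQ\Lambda$. Consequently each of the finitely many generators $\tau_k$ ($k\in A\cup B$) of $T(A\cup B)$ lies in $\QQ\Lambda$, and clearing denominators in their coordinates relative to the basis $\tau_i,\tau_j$ of $\Lambda$ produces an integer $N$ with $T(A\cup B)\subseteq\tfrac1N\Lambda$. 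Since $\tfrac1N\Lambda$ is a lattice, it is discrete, and therefore so is its subgroup $T(A\cup B)$.

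The step that needs care is precisely this last containment, and it is the heart of the matter: in general the union of two index sets, each with $T(\cdot)$ discrete, need not have $T(\cdot)$ discrete, because the sum of two incommensurable lattices is dense in $\RR^2$. What rescues the argument is exactly the hypothesis $\Abs{A\cap B}\ge 2$: sharing two independent indices forces both $T(A)$ and $T(B)$ to be commensurable with the single lattice $\Lambda$, and a finitely generated subgroup of the divisible group $\QQ\Lambda$ that happens to contain $\Lambda$ is automatically trapped inside $\tfrac1N\Lambda$, hence discrete. Everything else in the argument is routine.
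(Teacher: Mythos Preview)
Your proof is correct and rests on the same key observation as the paper's: the two shared indices produce a full-rank lattice $\Lambda$ that has finite index in both $T(A)$ and $T(B)$. The only difference is in packaging. The paper picks a single index $j\in B\setminus A$ and invokes Lemma~\ref{discrete} (with $G_1=\Lambda$, $G=T(A)$, $H=\ZZ\tau_j$) to conclude that $T(A\cup\{j\})$ is discrete, immediately contradicting the maximality of $A$. You instead observe that all the generators $\tau_k$, $k\in A\cup B$, lie in $\QQ\Lambda$, clear denominators to trap $T(A\cup B)$ inside $\tfrac1N\Lambda$, and then use maximality on both sides to force $A=B$. Your route is self-contained (it does not appeal to Lemma~\ref{discrete}) and proves the slightly stronger intermediate fact that $T(A\cup B)$ itself is discrete; the paper's route is a line shorter once Lemma~\ref{discrete} is available.
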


\begin{proof}
Suppose not, so that $i, k \in A\cap B$, $i\neq k$.
Write $\Gamma = \Span_\ZZ\Set{\tau_i, \tau_k}$.
Then $\Gamma$ is a lattice contained in $T(A) \cap T(B)$ and has finite index in
both $T(A)$ and in $T(B)$ (since these larger groups are also lattices).
We may of course assume that both $A$ and $B$ have at least three elements (otherwise one would contain
the other and one of them would not be maximal) and let then $j \in B \setminus A$.
Since $\Gamma \subseteq T(B)$ and $T(B)$ is discrete it follows that $\Gamma$ has finite index in
$$
\Gamma+\ZZ \tau_j = \Span_{\ZZ}\Set{\tau_i, \tau_k, \tau_j} \subseteq T(B).
$$
From Lemma \ref{discrete}, with $G_1=\Gamma$, $G=T(A)$, $H=\ZZ \tau_j$, we obtain that $T(A \cup \Set{j})$ is discrete,
which contradicts the maximality of $A$.

\end{proof}

By the observation above it is enough to search for a maximal set $J$ such that \eqref{et} is discrete.
By Lemma \ref{maximal-intersection} any two-element subset of such a maximal set $J$ determines $J$. Therefore the number of such $J$
is $O(n^2)$. We can enumerate them efficiently by starting with any set $J$ of size two and add elements to it that keep $T(J)$ discrete.
When we cannot do this any more we have found our unique maximal set that corresponds to (contains) the initial two-element set
that we started from. For each maximal such $J$ we only have to check that \eqref{et} is discrete,
which is doable in polynomial time.

The proof of Theorem \ref{main} is now complete.

\subsection{Deciding tiling.}\label{sec:algorithm}

We divide the convex symmetric polygons $P$ that tile multiply into two classes according to the number of edge pairs for which condition (A1) of Theorem \ref{bolle} holds. Notice that since $\tau_1 = e_2 + e_3 + \cdots + e_n$ (if we orient the vectors counterclockwise -- refer to Fig.\ \ref{polygon-figure}) it is impossible for (A1) to hold for no index.

\begin{itemize}
\item {\bf Class A:} For all lattices $L$ for which $P+L$ is a multiple tiling we have that
condition (A1) of Theorem \ref{bolle} holds for exactly one index $j$.
\item {\bf Class B:} There exists a lattice $L$ for which $P+L$ is a multiple tiling and for which condition (A1) of Theorem \ref{bolle} holds for at least two indices.
\end{itemize}
To decide if $P$ tiles multiply we first check if it is in Class A. If that fails we check if it is in Class B.

\subsubsection{Deciding if $P$ is in Class A}\label{classA}

It is enough to test for all indices $j=1, 2, \ldots, n$, if (assume for simplicity $j=1$)
\begin{itemize}
\item[(I)] $L = \Span_\ZZ\Set{\tau_1, e_2, \ldots, e_n} = \Span_\ZZ\Set{e_2, \ldots, e_n}$ is a lattice,
\item[(II)] for $j = 2, \ldots, n$ the straight line $\tau_j + \RR e_j$ contains a point commensurable with $L$, and
\item[(III)] for $j = 2, \ldots, n$ the vector $\tau_j$ is incommensurable with $L$.
\end{itemize}
For a point $p$ to be commensurable with $L$ it is necessary and sufficient that $\ZZ p + L$ is a discrete group. 
We have already described in \S \ref{sec:model} how we can test this.

Suppose we tested condition (I) and it is true. Carry out a linear transformation so that $L = \ZZ^2$. We can now test if the straight line $\tau_j + \RR e_j$ contains a rational point by first writing our line in the form $y=ax+b$ (or $x = ay+b$) with $a \in \QQ$ (since (I) holds) and observing that this line contains a rational point if and only if $b \in \QQ$.

\subsubsection{Deciding if $P$ is in Class B}\label{classB}

Suppose $P$ is Class B and the lattice $L$ verifies this property.
Define $J \subseteq [n]$ to consist of those indices $j \in [n]$ for which $\tau_j \in L$.
We have $\Abs{J} \ge 2$.
By Theorem \ref{bolle} it follows that for $j' \in [n]\setminus J$ we have that $e_{j'} \in L$ and also condition (B2) holds for each such $e_{j'}$ for the lattice $L$.
Therefore $T(J) + E([n]\setminus J) \subseteq L$ and this group is a lattice.

Let $J' \supseteq J$ be maximal with the property that $T(J')$ is discrete.
It follows that for all $j' \in [n] \setminus J'$ we still have (B1) and (B2) holding for $L$. 
And we also have $T(J') \supseteq T(J)$, with $[T(J') : T(J)] < \infty$, since $T(J)$ is a lattice, $J$ containing at least two elements.
So
$$
L' := T(J') + E([n] \setminus J) \supseteq L
$$
is still a lattice by Lemma \ref{discrete}.
It follows that condition (A1) holds for
all $j \in J'$ with the lattice $L'$ in place of $L$, so that $T(J') \subseteq L'$.

Assume now that $j' \in [n]\setminus J'$.
Then $j' \notin J$ as well and, by Theorem \ref{bolle}, $e_{j'} \in L \subseteq L'$, so that
(B1) holds for $j'$ and the lattice $L'$.
Since $L' \supseteq L$ by Theorem \ref{bolle} again we obtain that (B2) holds for $j'$ and the lattice $L'$ as well.

By the above we conclude that, at the cost of enlarging the lattice $L$, we may assume that the set
$$
J = \Set{ j \in [n]:\ \tau_j \in L}
$$
is maximal with respect to the property that $T(J)$ is discrete. As reasoned in the proof of Theorem \ref{discrete} such a $J$ is determined by two of its points, so there is $O(n^2)$ possible such sets $J$. Our algorithm for determining then if $P$ is in Class B is the following.

\begin{itemize}
\item[1.]
For each set $J \subseteq [n]$ that is maximal with respect to the property that $T(J)$ is discrete:
\begin{itemize}
\item[2.] if $L = T(J)+E([n]\setminus J)$ is not discrete skip this set $J$
\item[3.] {\setlength{\itemindent}{10em} if it is discrete check that for each $j' \in [n]\setminus J$ the line $\tau_{j'} + \RR e_{j'}$ contains a point commensurable with $L$. If this happens then this set $J$ is good and $P$ is in Class B tiling with a superlattice of $L$.}
\end{itemize}
\end{itemize}
To effect the loop in 1.\ we proceed as in the proof of Theorem \ref{discrete}: for every pair of indices in $[n]$ we obtain the unique maximal $J$ containing the indices by augmenting them as long as the group they generate remains discrete. And the checks in 3.\ are done as described in \S\ref{classA}.

\subsection{A realistic computational model.}\label{sec:realmodel}

We now change to a more realistic computational model. One way this can be done is to assume that all our numbers will be in the {\em fixed} field $\QQ(a)$, where $a$ is an algebraic number of degree $d$ and known minimal polynomial $A(X) \in \QQ[X]$. This avoids the trivial field $\QQ$, where all polygons admit lattice tilings. Every one of our numbers can be represented in the form
$$
x = x_0 + x_1 a + x_2 a^2 + \cdots + x_{d-1} a^{d-1},\ \ \ x_j \in \QQ.
$$
Addition and subtraction of numbers work componentwise. Multiplication is multiplication of polynomials modulo $A(X)$ and to find $1/x$ we use
the extended Euclidean algorithm \cite[Chapter 3]{GG13}.
It is also clear, when $x$ is given in this representation,
if it is rational or not. All these operations take time polynomial in the size of the numbers involved, so our algorithm remains polynomial time in this model as well.

\bibliographystyle{amsplain}

\end{document}